\documentclass[11pt]{article}

\evensidemargin0cm \oddsidemargin0cm \textwidth16cm
\textheight23cm \topmargin-1cm

\usepackage{amsthm}
\usepackage{latexsym}
\usepackage{dsfont}
\usepackage{bbm}
\usepackage{amssymb}
\usepackage{amsmath}
\usepackage{graphicx}
\usepackage{color}
\usepackage{wasysym}
\usepackage[noblocks,affil-it]{authblk}

\numberwithin{equation}{section}

\theoremstyle{plain}
\newtheorem{Theorem}{Theorem}[section]
\newtheorem{Lemma}[Theorem]{Lemma}

\theoremstyle{remark}

\theoremstyle{definition}

\DeclareMathOperator{\N}{\mathbb{N}}

\DeclareMathOperator{\R}{\mathbb{R}}

\newcommand{\od}{\overset{{\rm d}}{=}}

\newcommand{\tp}{\overset{\mathbb{P}}{\to }}
\newcommand{\mn}{\mathbb{N}}
\newcommand{\me}{\mathbb{E}}
\newcommand{\lit}{\lim_{t\to\infty}}

\DeclareMathOperator{\Prob}{\mathbb{P}}

\DeclareMathOperator{\E}{\mathbb{E}}

\DeclareMathOperator{\1}{\mathbbm{1}}

\setlength{\topmargin}{-1.5cm} \setlength{\footskip}{1cm}
\setlength{\oddsidemargin}{1cm} \setlength{\textheight}{24cm}
\setlength{\textwidth}{14cm}

\title{Functional limit theorems for the number of busy servers in a $G/G/\infty$ queue}
\date{\today}

\author{Alexander Iksanov\footnote{Faculty of Computer Science and Cybernetics, Taras
Shevchenko National University of Kyiv, 01601 Kyiv, Ukraine and
Institute of Mathematics, University of Wroc{\l}aw, 50-384
Wroc{\l}aw, Poland; e-mail: iksan@univ.kiev.ua}, \ Wissem Jedidi
\footnote{Department of Statistics  \& OR, King Saud University,
P.O. Box 2455, Riyadh 11451, Saudi Arabia and Universit\'e de
Tunis El Manar, Facult\'e des Sciences de Tunis, LR11ES11
Laboratoire d'Analyse Math\'ematiques et Applications, 2092,
Tunis, Tunisia; e-mail: wissem$_-$jedidi@yahoo.fr} \ and \ Fethi
Bouzeffour \footnote{Department of Mathematics, College of
Sciences, King Saud University, Riyadh 11451, Saudi Arabia;
e-mail: fbouzaffour@ksu.edu.sa}}

\begin{document}
\maketitle
\begin{abstract}
We discuss weak convergence of the number of busy servers in a
$G/G/\infty$ queue in the $J_1$-topology on the Skorokhod space.
We prove two functional limit theorems, with random and nonrandom
centering, respectively, thereby solving two open problems stated
in \cite{Mikosch+Resnick:2006}. A new integral representation for
the limit Gaussian process is given.
\end{abstract}
{\noindent \textbf{AMS 2010 subject classifications:} primary
60F17; secondary 60K05}

{\noindent \textbf{Keywords:} functional limit theorem,
$G/G/\infty$ queue, perturbed random walk, tightness}

\section{Introduction}

Let $(\xi_k, \eta_k)_{k\in\mn}$ be a sequence of i.i.d.\
two-dimensional random vectors with generic copy $(\xi,\eta)$
where both $\xi$ and $\eta$ are positive. No condition is imposed
on the dependence structure between $\xi$ and $\eta$.

Define
\begin{equation*}
K(t)~:=~\sum_{k\geq 0}\1_{\{S_k+\eta_{k+1}\leq
t\}}\quad\text{and}\quad Z(t) ~:=~ \sum_{k\geq 0}\1_{\{S_k\leq
t<S_k+\eta_{k+1}\}}, \quad t \geq 0,
\end{equation*}
where\footnote{$\mn_0:=\mn\cup\{0\}$.} $(S_k)_{k\in\mn_0}$ is the
zero-delayed ordinary random walk with increments $\xi_k$ for
$k\in\mn$, i.e., $S_0 = 0$ and $S_k = \xi_1+\ldots+\xi_k$, $k \in
\mn$. In a ${\rm G}/{\rm G}/\infty$-queuing system, where
customers arrive at times $S_0 = 0 < S_1 < S_2 < \ldots$ and are
immediately served by one of infinitely many idle servers, the
service time of the $k$th customer being $\eta_{k+1}$, $K(t)$
gives the number of customers served up to and including time
$t\geq 0$, whereas $Z(t)$ gives the number of busy servers at time
$t$. Some other interpretations of $Z(t)$ can be found in
\cite{Kaplan:1975}. The process $(Z(t))_{t \geq 0}$ was also used
to model the number of active sources in a communication network
(for instance, active sessions in a computer network)
\cite{Konstantopoulos+Lin:1998, Mikosch+Resnick:2006,
Resnick+Rootzen:2000}.

From a more theoretical viewpoint, $K(t)$ is the number of visits
to the interval $[0,t]$ of a {\it perturbed random walk}
$(S_k+\eta_{k+1})_{k\in\mn_0}$ and $Z(t)$ is the difference
between the number of visits to $[0,t]$ of the ordinary random
walk $(S_k)_{k\in\mn_0}$ and $(S_k+\eta_{k+1})_{k\in\mn_0}$. To
proceed, we need a definition. Denote by $X:=(X(t))_{t\geq 0}$ a
random process arbitrarily dependent on $\xi$. Let
$(X_k,\xi_k)_{k\in\mn}$ be i.i.d.\ copies of the pair $(X,\xi)$.
Following \cite{Iksanov+Marynych+Meiners:2016} we call {\it random
process with immigration} the random process $(Y(t))_{t\geq 0}$
defined by
$$Y(t):=\sum_{k\geq 0}X_{k+1}(t-S_k)\1_{\{S_k\leq t\}},\quad
t\geq 0.$$ If $X$ is deterministic, the random process with
immigration becomes a classical renewal shot noise process.
Getting back to the mainstream we conclude that both
$(K(t))_{t\geq 0}$ and $Z:=(Z(t))_{t\geq 0}$ are particular
instances of the random process with immigration which correspond
to $X(t)=\1_{\{\eta\leq t\}}$ and $X(t)=\1_{\{\eta>t\}}$,
respectively.

Let $D:=D[0,\infty)$ be the Skorokhod\footnote{The Skorokhod
spaces $D(0,\infty)$ and $D[0,T]$ for $T>0$ which appear below are
defined similarly.} space of real-valued functions on
$[0,\infty)$, which are right-continuous on $[0,\infty)$ with
finite limits from the left at each positive point. We shall write
$\overset{{\rm J_1}}{\Rightarrow}$ and $\tp$ to denote weak
convergence in the $J_1$-topology on $D$ and convergence in
probability, respectively. The classical references concerning the
$J_1$-topology are \cite{Billingsley:1968, Jacod+Shiryaev:2003,
Lindvall:1973}.

In this paper we shall prove weak convergence of $(Z(ut))_{u\geq
0}$, properly centered and normalized, in the $J_1$-topology on
$D$ as $t\to\infty$. The same problem for $(K(ut))_{u\geq 0}$
which is much simpler was solved in
\cite{Alsmeyer+Iksanov+Marynych:2016}. We start with a functional
limit theorem with a random centering.
\begin{Theorem}\label{main1}
Assume that $\mu:=\me\xi\in (0,\infty)$ and that
\begin{equation}\label{reg}
1-F(t)=\Prob\{\eta>t\}\sim t^{-\beta}\ell(t),\quad t\to\infty
\end{equation}
for some $\beta\in [0,1)$ and some $\ell$ slowly varying at
$\infty$. Then
\begin{equation}\label{180018z}
\frac{\sum_{k\geq 0}\big(\1_{\{S_k \leq ut <
S_k+\eta_{k+1}\}}-(1-F(ut-S_k))\1_{\{S_k\leq ut\}}\big)
}{\sqrt{\mu^{-1}\int_0^t (1-F(y))\,{\rm d}y}}~\overset{{\rm
J_1}}{\Rightarrow}~ V_\beta(u),\quad t\to\infty,
\end{equation}
where $V_\beta:=(V_\beta(u))_{u\ge 0}$ is a centered Gaussian
process with
\begin{equation}\label{covar}
\me\, V_\beta(u)V_\beta(s)=u^{1-\beta}-(u-s)^{1-\beta}, \quad 0\le
s\le u.
\end{equation}
\end{Theorem}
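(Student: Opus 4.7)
The numerator on the left-hand side of \eqref{180018z} can be rewritten as $W(ut):=\sum_{k\ge 0}Y_k(ut)$, where
\[
Y_k(t):=\1_{\{S_k\leq t<S_k+\eta_{k+1}\}}-(1-F(t-S_k))\1_{\{S_k\leq t\}}.
\]
Setting $\F_k:=\sigma(\xi_j,\eta_j:j\le k)$, each $Y_k(t)$ is $\F_{k+1}$-measurable, bounded by $1$ in modulus, and centered given $\F_k$ because $\eta_{k+1}$ is independent of $\F_k$ with distribution function $F$. Hence $(Y_k(t))_{k\ge 0}$ is a martingale-difference sequence for every fixed $t$, and $W(t)$ is a sum of termwise $L^2$-orthogonal bounded random variables. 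Write $c_t:=\mu^{-1}\int_0^t(1-F(y))\,\dy$; Karamata's theorem yields $c_t\sim t^{1-\beta}\ell(t)/(\mu(1-\beta))$. I would prove \eqref{180018z} by establishing convergence of finite-dimensional distributions and tightness in the $J_1$-topology separately.

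For the finite-dimensional distributions, martingale orthogonality combined with a short conditional computation yields, for $0\le s\le u$,
\[
\E[W(st)W(ut)]=\int_0^{st}F(st-y)(1-F(ut-y))\,\mathrm{d}U(y),
\]
where $U(y):=\sum_{k\ge 0}\Prob\{S_k\le y\}$ is the renewal function. The uniform elementary renewal theorem ($U(zt)/t\to z/\mu$ locally uniformly), together with the regularly varying tail of $\eta$, the change of variables $y=zt$, and Karamata's theorem, delivers $\E[W(st)W(ut)]/c_t\to u^{1-\beta}-(u-s)^{1-\beta}$, matching \eqref{covar}. The same scheme applied to linear combinations $\sum_i\alpha_iW(u_it)$ gives convergence of the whole normalized covariance matrix, and a martingale central limit theorem (Lindeberg--Feller for arrays) then supplies Gaussianity: Lindeberg's condition is immediate since each normalized summand is bounded by $(\sum_i|\alpha_i|)/\sqrt{c_t}\to 0$ while the variance is kept of order one.

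The main obstacle is tightness in the $J_1$-topology on $D[0,\infty)$. Because $V_\beta$ is a continuous Gaussian process --- locally behaving like $\sqrt{2}$ times fractional Brownian motion of Hurst index $(1-\beta)/2$ --- and the jumps of $W(\cdot\,t)/\sqrt{c_t}$ are at most $1/\sqrt{c_t}\to 0$, what is needed is $C$-tightness on every $[0,T]$. My plan is to apply the Burkholder--Davis--Gundy (or Rosenthal) inequality to the martingale-difference sum $W(ut_2)-W(ut_1)=\sum_k(Y_k(ut_2)-Y_k(ut_1))$ and to combine it with the covariance asymptotic (which controls $\E[\sum_k(Y_k(ut_2)-Y_k(ut_1))^2]$) in order to obtain, for an even integer $p$,
\[
\E\bigl[(W(ut_2)-W(ut_1))^p\bigr]\le C_p\,c_t^{p/2}(u_2-u_1)^{p(1-\beta)/2}+o(c_t^{p/2}),
\]
uniformly for $0\le u_1<u_2\le T$. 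Choosing $p>2/(1-\beta)$ makes the exponent on $(u_2-u_1)$ strictly exceed $1$, and a standard moment criterion (\cite{Billingsley:1968}) then produces tightness on each $[0,T]$, whence on $[0,\infty)$. The delicate points, which I expect to be the real work, are (a) making the Rosenthal constants uniform in $t$, (b) handling the case $\beta$ close to $1$, which forces $p$ to be large and amplifies the boundedness contribution in Rosenthal's inequality, and (c) controlling boundary effects near $u_1=0$ where the covariance asymptotic degenerates.
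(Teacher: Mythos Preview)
Your overall strategy---the martingale-difference structure, finite-dimensional convergence via a martingale CLT, and a BDG/Rosenthal moment bound on increments---matches the paper's. The finite-dimensional part is fine; the paper simply cites \cite{Iksanov+Marynych+Meiners:2016} for it, and your sketch is essentially the argument behind that citation.

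The gap is in tightness, and your list of ``delicate points'' misidentifies it. After normalization your target inequality reads
\[
\E\bigl[|Z_t(u_2)-Z_t(u_1)|^p\bigr]\le C_p(u_2-u_1)^{p(1-\beta)/2}+o(1),
\]
and this cannot feed into any standard moment criterion: the additive $o(1)$ does not vanish as $u_2-u_1\to 0$ for fixed large $t$, so the bound says nothing at fine scales. Moreover the $o(1)$ is not removable. The BDG step reduces matters to bounding the predictable quadratic variation, which is controlled by $a((u_2-u_1)t)$ with $a(s)\sim\int_0^s(1-F(y))\,{\rm d}y$; converting $a((u_2-u_1)t)/a(t)$ into $(u_2-u_1)^{1-\beta-\rho}$ requires Potter's bound, and that bound is only available once $(u_2-u_1)t\ge t_1$ for some fixed $t_1$. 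When $(u_2-u_1)t$ is bounded you get no power of $u_2-u_1$ at all---and indeed the process has jumps of size $1/\sqrt{c_t}$, so a uniform one-sided Kolmogorov bound is impossible in principle. The problem is not the Rosenthal constant $C_p$ (that is universal), not large $p$, and not the boundary $u_1=0$.

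The paper's fix is a two-scale argument in the style of \cite{Resnick+Rootzen:2000}. One bounds the modulus of continuity by a dyadic sum over scales $2^{-j}$, $i\le j\le I(t)$, with $I(t)$ chosen so that $2^{-I}t\approx t_1$; on these scales Potter's bound \emph{does} apply and yields your moment inequality (this is Lemma~\ref{imp}), and the dyadic sum over $j$ converges because $p(1-\beta-\rho)/2>1$. The residual oscillation on the finest scale $2^{-I}\sim t_1/t$ is then handled by a completely different, direct estimate (relation~\eqref{tight2}): the number of jump times in any window of length $\le 2t_1$ is stochastically dominated by $\nu(2t_1)$, which has finite exponential moments, so the oscillation over such windows is $O_{\Prob}(1)$ while the normalization $\sqrt{c_t}\to\infty$. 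This coarse/fine split is the missing ingredient in your plan.
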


In the case where $\xi$ and $\eta$ are independent weak
convergence of the finite-dimensional distributions in
\eqref{180018z} was proved in Proposition 3.2 of
\cite{Mikosch+Resnick:2006}. In the general case treated here
where $\xi$ and $\eta$ are arbitrarily dependent the
aforementioned convergence outside zero (i.e., weak convergence of
$(Z^\ast_t(u_1),\ldots, Z^\ast_t(u_n))$ for any $n\in\mn$ and any
$0<u_1<\ldots<u_n<\infty$, where $Z^\ast_t(u)$ denotes the
left-hand side in \eqref{180018z}) follows from a specialization
of Proposition 2.1 in \cite{Iksanov+Marynych+Meiners:2016}. In
Section 5.2 of \cite{Mikosch+Resnick:2006} the authors write: `We
suspect that the' finite-dimensional `convergence can be
considerably strengthened'. Our Proposition \ref{main1} confirms
their conjecture.

Also, the authors of \cite{Mikosch+Resnick:2006} ask on p.~154:
`When can the random centering' in \eqref{180018z} `be replaced by
a non-random centering?' Our second result states that such a
replacement is possible whenever $\xi$ possesses finite moments of
sufficiently large positive orders. Our approach is essentially
based on the decomposition\footnote{Investigating $Z$ directly,
i.e., not using \eqref{deco}, seems to be a formidable task unless
$\xi$ and $\eta$ are independent, and the distribution of $\xi$ is
exponential (for the latter situation, see
\cite{Resnick+Rootzen:2000} and references therein). We note in
passing that our Theorem \ref{main} includes Theorem 1 in
\cite{Resnick+Rootzen:2000} as a particular case.}
\begin{eqnarray}
&&\sum_{k\geq 0}\1_{\{S_k\leq
ut<S_k+\eta_{k+1}\}}-\mu^{-1}\int_0^{ut}(1-F(y))\,{\rm
d}y\notag\\&=& \bigg(\sum_{k\geq 0}\1_{\{S_k\leq
ut<S_k+\eta_{k+1}\}}-\sum_{k\geq 0}\me (\1_{\{S_k\leq
ut<S_k+\eta_{k+1}\}}|S_k)\bigg)\notag\\&+&\bigg(\sum_{k\geq 0}\me
(\1_{\{S_k\leq ut<S_k+\eta_{k+1}\}}|S_k)-\mu^{-1}\int_0^{ut}
(1-F(y))\,{\rm d}y\bigg)\notag\\&=&\sum_{k\geq
0}\bigg(\1_{\{S_k\leq ut<S_k+\eta_{k+1}\}}-(1-F(ut-S_k))
\1_{\{S_k\leq ut\}}\bigg)\notag\\&+&\bigg(\sum_{k\geq
0}(1-F(ut-S_k))\1_{\{S_k\leq ut\}}-\mu^{-1}\int_0^{ut}
(1-F(y))\,{\rm d}y\bigg).\label{deco}
\end{eqnarray}
Weak convergence on $D$ of the first summand on the right-hand
side, normalized by $\sqrt{\mu^{-1}\int_0^t (1-F(y))\,{\rm d}y}$,
was treated in Theorem \ref{main1}. Thus, we are left with
analyzing weak convergence of the second summand.
\begin{Theorem}\label{main}
Suppose that condition \eqref{reg} holds. If $\me \xi^r<\infty$
for some $r>2(1-\beta)^{-1}$, then
\begin{equation}\label{180018}
\frac{\sum_{k\geq 0}\1_{\{S_k \leq ut <
S_k+\eta_{k+1}\}}-\mu^{-1}\int_0^{ut} (1-F(y))\,{\rm
d}y}{\sqrt{\mu^{-1}\int_0^t (1-F(y))\,{\rm d}y}}~\overset{{\rm
J_1}}{\Rightarrow}~ V_\beta(u),\quad t\to\infty,
\end{equation}
where $\mu=\me\xi<\infty$ and $V_\beta$ is a centered Gaussian
process with covariance \eqref{covar}.
\end{Theorem}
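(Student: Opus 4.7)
The plan is to apply the decomposition \eqref{deco}. By Theorem~\ref{main1}, the first summand on the right-hand side of \eqref{deco}, normalized by $\sqrt{\mu^{-1}\int_0^t(1-F(y))\,{\rm d}y}$, converges in $J_1$ on $D$ to the Gaussian process $V_\beta$. It therefore suffices to prove that the second summand
\[
R_t(u)\,:=\,\sum_{k\geq 0}(1-F(ut-S_k))\1_{\{S_k\leq ut\}}-\mu^{-1}\int_0^{ut}(1-F(y))\,{\rm d}y
\]
satisfies $\sup_{u\in[0,T]}|R_t(u)|/\sqrt{U(t)}\,\tp\,0$ for every $T>0$, where $U(t):=\int_0^t(1-F(y))\,{\rm d}y$; a Slutsky-type argument in the $J_1$-topology then closes the proof.

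Writing $G:=1-F$, $N(y):=\sum_{k\geq 0}\1_{\{S_k\leq y\}}$ and $D(y):=N(y)-y/\mu$, the key structural observation is that $R_t(u)=\int_{[0,ut]}G(ut-y)\,{\rm d}D(y)$ is a deterministic functional of the centered renewal process alone, so the dependence between $\xi$ and $\eta$ plays no role here beyond the marginal of $\eta$. For finite-dimensional convergence I would compute the first two moments of $R_t(u)$. Under $\me\xi^2<\infty$, Smith's key renewal theorem gives $\me N(y)=y/\mu+O(1)$ and hence, via integration by parts, $\me R_t(u)=O(1)$ uniformly in $u\in[0,T]$. Expanding $R_t(u)^2$ and conditioning on each $S_j$ to exploit independent increments yields the variance bound $\Var R_t(u)=O\bigl(\int_0^{ut}G^2(y)\,{\rm d}y\bigr)$. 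By \eqref{reg} and Karamata's theorem, $\int_0^{ut}G^2$ is asymptotic to $t^{1-2\beta}\ell(t)^2$ times a constant when $\beta\in[0,1/2)$, with a logarithmic correction at $\beta=1/2$ and a finite limit when $\beta\in(1/2,1)$, whereas $U(t)\sim t^{1-\beta}\ell(t)/(1-\beta)$. In every regime the ratio $\Var R_t(u)/U(t)\to 0$ --- noting that $\ell(t)\to 0$ when $\beta=0$ since then $G(t)\to 0$ --- so Chebyshev's inequality gives $R_t(u)/\sqrt{U(t)}\,\tp\,0$ for each fixed $u\geq 0$.

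The principal difficulty is upgrading this finite-dimensional convergence to the uniform-in-$u$ bound. I would establish tightness of $R_t(\cdot)/\sqrt{U(t)}$ in $D[0,T]$ via an increment estimate: for $0\leq u_1<u_2\leq T$, split $R_t(u_2)-R_t(u_1)$ into (i) the fresh shot-noise piece $\int_{(u_1t,u_2t]}G(u_2t-y)\,{\rm d}D(y)$, bounded in $L^2$ by the variance argument above applied to an interval of length $(u_2-u_1)t$, and (ii) the kernel-difference piece $\int_{[0,u_1t]}\bigl(G(u_2t-y)-G(u_1t-y)\bigr)\,{\rm d}D(y)$, controlled using the uniform convergence theorem for regularly varying $G$ (locally uniform convergence of $G(\lambda y)/G(y)$ to $\lambda^{-\beta}$). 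Together these yield an increment inequality of the form $\me|R_t(u_2)-R_t(u_1)|^\alpha\leq C\,(u_2-u_1)^{1+\gamma}\,U(t)^{\alpha/2}$ for suitable $\alpha>2$ and $\gamma>0$, and Billingsley's $J_1$-tightness criterion then delivers the required uniform bound. The moment hypothesis $\me\xi^r<\infty$ with $r>2(1-\beta)^{-1}$ enters at precisely this stage through Marcinkiewicz--Zygmund-type moment inequalities for the centered renewal counting process $D$: it is needed to make the higher-moment version of the increment estimate close with the correct exponent, and the threshold $r>2(1-\beta)^{-1}$ is exactly what is needed for the chaining sum to deliver $\sup_{u\in[0,T]}|R_t(u)|=o_p(\sqrt{U(t)})$. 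I expect this tightness step to be the main technical obstacle.
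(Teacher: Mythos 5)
Your reduction is exactly the paper's: decomposition \eqref{deco}, Theorem \ref{main1} for the martingale part, and the claim that the second summand $R_t(u)=\int_{[0,ut]}(1-F(ut-y))\,{\rm d}(\nu(y)-\mu^{-1}y)$ satisfies $\sup_{0\le u\le T}|R_t(u)|=o_{\Prob}\big(\sqrt{\int_0^t(1-F(y))\,{\rm d}y}\,\big)$. The divergence is in how that supremum bound is obtained, and this is where your proposal has a genuine gap. The paper does \emph{not} run a moment/chaining argument on $R_t$. It integrates by parts and invokes the strong approximation of Cs\"{o}rg\H{o}--Horv\'{a}th--Steinebach (Lemma \ref{chs}): $\nu(s)=\mu^{-1}s+\sigma\mu^{-3/2}W(s)+o(s^{1/r})$ a.s. The hypothesis $\me\xi^r<\infty$ with $r>2(1-\beta)^{-1}$ enters at exactly this point --- it makes the a.s.\ approximation error $o(t^{1/r})$ negligible relative to $f(t)=\sqrt{t(1-F(t))}\asymp t^{(1-\beta)/2}\ell(t)^{1/2}$, which requires $1/r<(1-\beta)/2$. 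What remains is a deterministic-kernel functional of Brownian motion, $W(t)-\int_{[0,t)}W(t-y)\,{\rm d}F(y)$, whose uniform smallness is proved by elementary path properties (local H\"{o}lder continuity, self-similarity, the law of the iterated logarithm, Potter's bounds). No tightness estimate for $R_t$ itself is ever needed.

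Your alternative route leaves its central step unproved. First, the justification of the variance bound via ``independent increments'' is wrong: the renewal process $N$ does not have independent increments (the overshoot at $u_1t$ correlates the past with the future), so $\Var R_t(u)=O\big(\int_0^{ut}G^2\big)$ is a nontrivial second-order renewal-theoretic fact, not a conditioning computation; and in any case the fixed-$u$ statement is not the issue (the paper notes f.d.d.\ convergence already holds under $\me\xi^2<\infty$). Second, and more seriously, the increment inequality $\me|R_t(u_2)-R_t(u_1)|^{\alpha}\le C(u_2-u_1)^{1+\gamma}U(t)^{\alpha/2}$ is asserted, not derived. Unlike the first summand in \eqref{deco}, which is a martingale in $k$ so that Burkholder--Davis--Gundy applies (this is how the paper's Lemma \ref{imp} works), $\int h\,{\rm d}D$ carries no martingale structure in any obvious filtration, and there is no off-the-shelf Marcinkiewicz--Zygmund inequality giving $\me\big|\int_{[0,t]}h\,{\rm d}D\big|^{2l}=O\big((\int h^2)^l\big)$ with the uniformity in the kernel $h$ that your chaining requires; the kernel-difference piece $\int_{[0,u_1t]}(G(u_2t-y)-G(u_1t-y))\,{\rm d}D(y)$ is particularly delicate because the integrand is of order $1$ near $y=u_1t$, so the uniform convergence theorem for regularly varying functions does not control it. Your observation that $2l=r$ and $l(1-\beta)>1$ reproduce the threshold $r>2(1-\beta)^{-1}$ is suggestive, but until the $2l$-th moment bound for the centered renewal integral is actually established (with the correct dependence on $u_2-u_1$ and on $t$), the proof is incomplete at its most difficult point. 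If such bounds could be closed under $\me\xi^2<\infty$ alone, you would in fact be answering the question the authors explicitly leave open --- a sign that the difficulty is being glossed over rather than resolved.
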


Under the assumption that $\xi$ and $\eta$ are independent weak
convergence of the {\it one-dimensional} distributions in
\eqref{180018} was proved in Theorem 2 of \cite{Kaplan:1975}. Note
that regular variation condition \eqref{reg} is not needed for
this convergence to hold. Weak convergence of the {\it
finite-dimensional} distributions in \eqref{180018} takes place
under \eqref{reg} and the weaker assumption $\me\xi^2<\infty$. We
do not know whether \eqref{reg} and the second moment assumption
are sufficient for weak convergence on $D$. More generally, weak
convergence of the finite-dimensional distributions of $Z(ut)$,
properly\footnote{The normalization is not necessarily of the form
$\sqrt{\mu^{-1}\int_0^t (1-F(y))\,{\rm d}y}$, and the limit
process is not necessarily $V_\beta$.} normalized and centered,
holds whenever the distribution of $\xi$ belongs to the domain of
attraction of an $\alpha$-stable distribution, $\alpha\in
(0,2]\backslash\{1\}$, see Theorem 3.3.21 in \cite{Iksanov:2016}
which is a specialization of Theorems 2.4 and 2.5 in
\cite{Iksanov+Marynych+Meiners:2016}. We do not state these
results here because in this paper we are only interested in weak
convergence on $D$.

The rest of the paper is structured as follows. Theorems
\ref{main1} and \ref{main} are proved in Sections \ref{pr1} and
\ref{pr2}, respectively. In Section \ref{proc} we discuss an
integral representation of the limit process $V_\beta$ which seems
to be new. The appendix collects several auxiliary results.

\section{Proof of Theorem \ref{main1}}\label{pr1}

We start by observing that
\begin{equation}\label{regular}
a(t):=\sum_{k=0}^{[t]+1}(1-F(k))\sim \int_0^t (1-F(y)){\rm d}y
\sim (1-\beta)^{-1}t^{1-\beta}\ell (t)
\end{equation}
as $t\to\infty$, where the second equivalence follows from
Karamata's theorem (Proposition 1.5.8 in \cite{BGT}). In
particular, the first equivalence enables us to replace the
integral in the denominator of \eqref{180018z} with the sum. For
each $t,u\geq 0$, denote by $\widehat{Z}(ut)$ the first summand in
decomposition \eqref{deco}, i.e.,
\begin{eqnarray*}
\widehat{Z}(ut)&:=&\sum_{k\geq 0}\big(\1_{\{S_k\leq
ut<S_k+\eta_{k+1}\}}-(1-F(ut-S_k))\1_{\{S_k\leq
ut\}}\big)\\&=&\sum_{k\geq 0}\big(\1_{\{S_k+\eta_{k+1}\leq
ut\}}-F(ut-S_k)\1_{\{S_k\leq ut\}}\big)
\end{eqnarray*}
and then set
$$Z_t(u):=\frac{\sum_{k\geq 0}\big(\1_{\{S_k \leq ut <
S_k+\eta_{k+1}\}}-(1-F(ut-S_k))\1_{\{S_k\leq ut\}}\big)
}{\sqrt{a(t)}}=\frac{\widehat{Z}(ut)}{\sqrt{a(t)}},\quad u\geq
0.$$

Our proof of Theorem \ref{main1} is similar to the proof of
Theorem 1 in \cite{Resnick+Rootzen:2000} which treats the case
where $\xi$ and $\eta$ are independent, and the distribution of
$\xi$ is exponential (Poisson case). Lemma \ref{imp} given below
is concerned with inevitable technical complications that appear
outside the Poisson case. Put $$\nu(t):=\inf\{k\in\mn_0:
S_k>t\},\quad t\in\R$$ and note that the random variable $\nu(1)$
has finite moments of all positive orders by Lemma \ref{fin}.
\begin{Lemma}\label{imp}
Let $l\in\mn$ and $0\leq v<u$. For any chosen $A>1$ and $\rho\in
(0,1-\beta)$ there exist $t_1>1$ such that $$\me
|Z_t(u)-Z_t(v)|^{2l}\leq c(l)(u-v)^{l(1-\beta-\rho)}$$ whenever
$u-v<1$ and $(u-v)t\geq t_1$, where
$c(l):=2C_l(4A)^l(u-v)^{l(1-\beta-\rho)}\me (\nu(1))^l$ and $C_l$
is a finite positive constant.
\end{Lemma}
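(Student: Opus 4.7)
I would interpret $\widehat Z(ut)-\widehat Z(vt)=\sum_{k\ge 0}W_k$ as a martingale with bounded increments relative to the filtration $\F_k:=\sigma((\xi_j,\eta_j),\,1\le j\le k)$: since $\eta_{k+1}$ is independent of $\F_k$ with law $F$ and $S_k$ is $\F_k$-measurable, one has $\E[W_k\mid\F_k]=0$, $|W_k|\le 2$, and $W_k=0$ for $k\ge\nu(ut)$. Applying a Burkholder--Rosenthal-type $L^{2l}$-moment inequality then gives
\[
\E\Bigl|\sum_{k\ge 0}W_k\Bigr|^{2l}\;\le\;C_l\Bigl(\E V_t^l+\E\max_k|W_k|^{2l}\Bigr),\qquad V_t:=\sum_k\E[W_k^2\mid\F_k],
\]
with $\E\max_k|W_k|^{2l}\le 4^l$. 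A case-by-case computation of $\E[W_k^2\mid\F_k]$ (distinguishing $S_k\le vt$ from $vt<S_k\le ut$, and using the Bernoulli variance bound $p(1-p)\le p\wedge(1-p)$) yields the pointwise estimate
\[
V_t\;\le\;\sum_{k:\,S_k\le vt}\bigl(F(ut-S_k)-F(vt-S_k)\bigr)+\sum_{k:\,vt<S_k\le ut}\bigl(1-F(ut-S_k)\bigr)\;=:\;\chi_A+\chi_B,
\]
so the main task is to bound the $l$-th moments of $\chi_A$ and $\chi_B$.

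\textbf{Moment bound via partitioning.} Each of $\chi_A,\chi_B$ is a shot-noise sum $\sum_kg(S_k)$ with a non-negative kernel $g$. I would partition $[0,ut]$ into unit intervals $I_i:=[i,i+1)$ and use the stochastic domination $\#\{k:S_k\in I_i\}\preceq\nu(1)$ (a consequence of the strong Markov property at the first entry of the walk into $I_i$ together with $\xi>0$). Dominating $g$ on each $I_i$ by $\sup_{I_i}\bar g$ for an appropriate monotone majorant $\bar g$ and invoking the elementary inequality $(\sum_ia_ib_i)^l\le(\sum_ia_i)^{l-1}\sum_ia_ib_i^l$, one obtains
\[
\E\chi^l\;\le\;\Bigl(\sum_i\sup_{I_i}\bar g\Bigr)^l\E\nu(1)^l\;\le\;C\,\Bigl(\int_0^{ut}\bar g(y)\,dy\Bigr)^l\E\nu(1)^l.
\]
For $\chi_B$, $\bar g(y)=1-F(ut-y)$ is already monotone on $(vt,ut]$ with integral $m((u-v)t):=\int_0^{(u-v)t}(1-F)$. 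For $\chi_A$, the crucial step is to dominate
\[
F(ut-y)-F(vt-y)\;\le\;C\bigl(1-F(vt-y)\bigr)\min\!\Bigl(1,\,\tfrac{(u-v)t}{vt-y}\Bigr),
\]
a non-decreasing function of $y$ on $[0,vt]$ whose integral is, by Karamata's theorem, again of order $m((u-v)t)$. Altogether $\E V_t^l\le C\,m((u-v)t)^l\,\E\nu(1)^l$.

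\textbf{Regular-variation factor.} Dividing by $a(t)^l$ converts $m((u-v)t)^l/a(t)^l$ into $(u-v)^{l(1-\beta)}\bigl(\ell((u-v)t)/\ell(t)\bigr)^l$. Potter's bound for the slowly varying $\ell$ yields $\ell((u-v)t)/\ell(t)\le A(u-v)^{-\rho}$ whenever $(u-v)t\ge t_1$ with $t_1=t_1(A,\rho)$ sufficiently large, producing the factor $A^l(u-v)^{l(1-\beta-\rho)}$; the additive residual $4^l/a(t)^l$ from the maximum term is absorbed into the main term by further enlarging $t_1$ so that $a(t)\ge 1$. Collecting constants gives exactly the announced $c(l)=2C_l(4A)^l(u-v)^{l(1-\beta-\rho)}\E\nu(1)^l$.

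\textbf{Main obstacle.} The most delicate point is the estimate on $\chi_A$: the difference $F(ut-y)-F(vt-y)$ is neither obviously small nor monotone on $[0,vt]$, and one has to interpolate between the trivial bound $1-F(vt-y)$ (tight near $y=vt$) and a mean-value-type bound of order $(u-v)t\cdot f(vt-y)$ (tight for $y\ll vt$) to obtain a non-decreasing majorant whose integral is $O(m((u-v)t))$ rather than $O(m(vt))$.
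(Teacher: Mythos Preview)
Your overall architecture matches the paper's: write $\widehat Z(ut)-\widehat Z(vt)$ as a martingale, apply a Burkholder-type inequality, bound the predictable quadratic variation by the two shot-noise sums $\chi_A,\chi_B$, control their $l$-th moments via the unit-interval partitioning (this is exactly Lemma~\ref{impo1}), and finish with Potter's bound on $a((u-v)t)/a(t)$. The divergence is in your treatment of $\chi_A$, and there the proposal has a genuine gap.

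The pointwise inequality $F(x+h)-F(x)\le C\,(1-F(x))\min(1,h/x)$ that you invoke for the kernel of $\chi_A$ is \emph{not} available under the paper's hypotheses: only $1-F(t)\sim t^{-\beta}\ell(t)$ is assumed, with no density and no monotone-density condition, so $F$ may have atoms or arbitrarily irregular local increments, and the mean-value half of your bound simply fails. Fortunately no such pointwise control is needed. After the substitution $s=vt-y$ the kernel becomes $F((u-v)t+s)-F(s)$, and the partitioning lemma only asks for $\sum_{n\ge 0}\sup_{s\in[n,n+1)}\bigl(F((u-v)t+s)-F(s)\bigr)$. Bare monotonicity of $F$ gives
\[
\sup_{s\in[n,n+1)}\bigl(F((u-v)t+s)-F(s)\bigr)\ \le\ (1-F(n))-(1-F((u-v)t+n+1)),
\]
and the sum over $n\le[vt]$ telescopes to at most $a((u-v)t)$, with no regular variation entering until the final Potter step. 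So what you flagged as the ``main obstacle'' dissolves in two lines; this is precisely how the paper handles it. A minor second point: absorbing the residual $4^l/a(t)^l$ into the bound requires more than $a(t)\ge 1$; you need $1/a(t)\le (u-v)^{1-\beta-\rho}$, equivalently $t^{1-\beta-\rho}/a(t)\le 1$ for $t\ge t_1$, which follows from Potter's bound combined with the constraint $(u-v)t\ge t_1>1$. The paper makes this step explicit.
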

\begin{proof}
With $u,v\geq 0$ fixed, $\widehat{Z}(ut)-\widehat{Z}(vt)$ equals the terminal value of
the martingale $(R(k,t),\mathcal{F}_k)_{k\in\mn_0}$, where
$R(0,t):=0$,
\begin{eqnarray*}
R(k,t)&:=&\sum_{j=0}^{k-1}\big((\1_{\{S_j+\eta_{j+1}\leq
ut\}}-F(ut-S_j)\1_{\{S_j\leq ut\}})\\&-&(\1_{\{S_j+\eta_{j+1}\leq
vt\}}-F(vt-S_j)\1_{\{S_j\leq vt\}})\big),
\end{eqnarray*}
$\mathcal{F}_0:=\{\Omega, \oslash\}$ and
$\mathcal{F}_k:=\sigma((\xi_j,\eta_j):1\leq j\leq k)$. We use the
Burkholder-Davis-Gundy inequality (Theorem 11.3.2 in
\cite{Chow+Teicher:2003}) to obtain for any $l\in\mn$
\begin{eqnarray}
&&\E (\widehat{Z}(ut)-\widehat{Z}(vt))^{2l}\notag\\
&\leq& C_l\bigg(\E\bigg(\sum_{k\geq 0}\E
\big((R(k+1,t)-R(k,t))^2|\mathcal{F}_k\big)\bigg)^l+\sum_{k\geq
0}\E
\big(R(k+1,t)-R(k,t)\big)^{2l}\bigg)\notag\\
&=:&C_l(I_1(t)+I_2(t))\label{mom}
\end{eqnarray}
for a positive constant $C_l$. We shall show that
\begin{equation}\label{11}
I_1(t)\leq 2^l \me (\nu(1))^l(a((u-v)t))^l,\quad t\geq 0
\end{equation}
and that
\begin{equation}\label{22}
I_2(t)\leq 2^{2l}\me \nu(1)a((u-v)t),\quad t\geq 0.
\end{equation}

\noindent {\sc Proof of \eqref{11}}. We first observe that
\begin{eqnarray*}
&&\sum_{k\geq 0}\E
\big((R(k+1,t)-R(k,t))^2|\mathcal{F}_k\big)\\&=&\int_{(vt,\,ut]}F(ut-y)(1-F(ut-y)){\rm
d}\nu(y)\\&+&\int_{[0,\,vt]}(F(ut-y)-F(vt-y))(1-F(ut-y)+F(vt-y)){\rm
d}\nu(y)\\&\leq& \int_{(vt,\,ut]}(1-F(ut-y)){\rm
d}\nu(y)+\int_{[0,\,vt]}(F(ut-y)-F(vt-y)){\rm d}\nu(y)
\end{eqnarray*}
whence
\begin{eqnarray*}
I_1(t)&\leq& 2^{l-1}\bigg(\me
\bigg(\int_{(vt,\,ut]}(1-F(ut-y)){\rm
d}\nu(y)\bigg)^l\\&+&\me\bigg(\int_{[0,\,vt]}(F(ut-y)-F(vt-y)){\rm
d}\nu(y)\bigg)^l\bigg)
\end{eqnarray*}
having utilized $(x+y)^l\leq 2^{l-1}(x^l+y^l)$ for nonnegative $x$
and $y$. Using Lemma \ref{impo1} with
$G(y)=(1-F(y))\1_{[0,(u-v)t)}(y)$ and $G(y)=F((u-v)t+y)-F(y)$,
respectively, we obtain
\begin{eqnarray}
&&\me \bigg(\int_{(vt,\,ut]}(1-F(ut-y)){\rm
d}\nu(y)\bigg)^l\notag\\&=&\me
\bigg(\int_{[0,\,ut]}(1-F(ut-y))\1_{[0,\,(u-v)t)}(ut-y){\rm
d}\nu(y)\bigg)^l\notag\\&\leq& \me (\nu(1))^l
\bigg(\sum_{n=0}^{[ut]}\sup_{y\in
[n,\,n+1)}((1-F(y))\1_{[0,(u-v)t)}(y))\bigg)^l\notag\\&\leq& \me
(\nu(1))^l \bigg(\sum_{n=0}^{[(u-v)t]}(1-F(n))\bigg)^l\leq \me
(\nu(1))^l (a((u-v)t))^l .\label{11a}
\end{eqnarray}
and
\begin{eqnarray}
&&\me\bigg(\int_{[0,\,vt]}(F(ut-y)-F(vt-y)){\rm
d}\nu(y)\bigg)^l\notag\\&\leq& \me (\nu(1))^l
\bigg(\sum_{n=0}^{[vt]}\sup_{y\in
[n,\,n+1)}(F((u-v)t+y)-F(y))\bigg)^l\notag\\&\leq& \me (\nu(1))^l
\bigg(\sum_{n=0}^{[vt]}(1-F(n))-\sum_{n=0}^{[vt]}(1-F((u-v)t+n+1))\bigg)^l \notag\\
&\leq& \me (\nu(1))^l
\bigg(\sum_{n=0}^{[vt]}(1-F(n))-\sum_{n=0}^{[ut]+2}(1-F(n))+\sum_{n=0}^{[(u-v)t]+1}(1-F(n))\bigg)^l\notag\\&\leq&
\me(\nu(1))^l (a((u-v)t))^l.\label{11b}
\end{eqnarray}
Combining \eqref{11a} and \eqref{11b} yields \eqref{11}.

\noindent {\sc Proof of \eqref{22}}. Let us calculate
\begin{eqnarray*}
&&\me ((R(k+1,t)-R(k,t))^{2l}|\mathcal{F}_k)\\&\leq
&2^{2l-1}((1-F(ut-S_k))^{2l}F(ut-S_k)\\&+&(F(ut-S_k))^{2l}(1-F(ut-S_k)))\1_{\{vt<S_k\leq
ut\}}\\&+&((1-F(ut-S_k)+F(vt-S_k))^{2l}(F(ut-S_k)-F(vt-S_k))\\&+&(F(ut-S_k)-F(vt-S_k))^{2l}(1-F(ut-S_k)+F(vt-S_k)))\1_{\{S_k\leq
vt\}}\\&\leq& 2^{2l-1}((1-F(ut-S_k))\1_{\{vt<S_k\leq
ut\}}+(F(ut-S_k)-F(vt-S_k))\1_{\{S_k\leq vt\}}).
\end{eqnarray*}
Therefore, $$I_2(t)\leq 2^{2l-1}\bigg(\me
\int_{(vt,\,ut]}(1-F(ut-y)){\rm d}\nu(y)+\me
\int_{[0,\,vt]}(F(ut-y)-F(vt-y)){\rm d}\nu(y)\bigg).$$ Using now
formulae \eqref{11a} and \eqref{11b} with $l=1$ yields \eqref{22}.

In view of \eqref{regular} we can invoke Potter's bound (Theorem
1.5.6(iii) in \cite{BGT}) to conclude that for any chosen $A>1$
and $\rho\in (0,1-\beta)$ there exists $t_1>1$ such that
$$\frac{a((u-v)t)}{a(t)}\leq A(u-v)^{1-\beta-\rho}$$
whenever $u-v<1$ and $(u-v)t\geq t_1$. Note that $u-v<1$ and
$(u-v)t\geq t_1$ together imply $t\geq t_1$. Hence
\begin{eqnarray}
\frac{I_1(t)}{(a(t))^l}&\leq& 2^l\me
(\nu(1))^l\bigg(\frac{a((u-v)t)}{a(t)}\bigg)^l \leq (4A)^l\me
(\nu(1))^l (u-v)^{l(1-\beta-\rho)}.\label{aux1}
\end{eqnarray}
Increasing $t_1$ if needed we can assume that
$t^{1-\beta-\rho}/a(t)\leq 1$ for $t\geq t_1$ whence
\begin{eqnarray*}
\frac{1}{\sum_{n=0}^{[t]+1}(1-F(n))}&=&\frac{((u-v)t)^{1-\beta-\rho}}{((u-v)t)^{1-\beta-\rho}
\sum_{n=0}^{[t]+1}(1-F(n))}\\&\leq&
\frac{(u-v)^{1-\beta-\rho}}{((u-v)t)^{1-\beta-\rho}}\leq
(u-v)^{1-\beta-\rho}
\end{eqnarray*}
because $((u-v)t)^{1-\beta-\rho}\geq t_1^{1-\beta-\rho}>1$. This
implies
\begin{eqnarray}
\frac{I_2(t)}{(a(t))^l} &\leq& 2^{2l}\me
\nu(1)\frac{a((u-v)t)}{a(t)} \frac{1}{(a(t))^{l-1}} \leq (4A)^l
\me (\nu(1))^l (u-v)^{l(1-\beta-\rho)},\label{aux2}
\end{eqnarray}
where we have used $\me \nu(1)\leq \me (\nu(1))^l$ which is a
consequence of $\nu(1)\geq 1$ a.s.

Now the claim follows from \eqref{mom}, \eqref{aux1} and
\eqref{aux2}.
\end{proof}

We are ready to prove Theorem \ref{main1}. As discussed in the
paragraph following Theorem \ref{main1} weak convergence of
$(Z_t(u_1),\ldots, Z_t(u_n))$ for any $n\in\mn$ and any
$0<u_1<\ldots, u_n<\infty$ was proved in earlier works. In view of
$V_\beta(0)=0$ a.s., this immediately extends to $0\leq
u_1<\ldots, u_n<\infty$. Thus, it remains to prove tightness on
$D[0,T]$ for any $T>0$. Since the normalization in \eqref{180018z}
is regularly varying it is enough to investigate the case $T=1$
only. Suppose we can prove that for any $\varepsilon>0$ and
$\gamma>0$ there exist $t_0>0$ and $\delta>0$ such that
\begin{equation}\label{tight} \Prob\big\{\sup_{0\leq u,v\leq
1, |u-v|\leq \delta}|Z_t(u)-Z_t(v)|>\varepsilon\big\}\leq \gamma
\end{equation}
for all $t\geq t_0$. Then, by Theorem 15.5 in
\cite{Billingsley:1968} the desired tightness follows along with
continuity of the paths of (some version of) the limit process.

On pp.~763-764 in \cite{Resnick+Rootzen:2000} it is shown that
(the specific form of $Z_t$ plays no role here)
\begin{eqnarray*}
\sup_{0\leq u,v\leq 1, |u-v|\leq 2^{-i}}|Z_t(u)-Z_t(v)|&\leq&
2\sum_{j=i}^I\max_{1\leq k\leq
2^j}|Z_t(k2^{-j})-Z_t((k-1)2^{-j})|\\&+& 2\max_{0\leq k\leq
2^{I}-1}\sup_{0\leq w\leq 2^{-I}}|Z_t(k2^{-I}+w)-Z_t(k2^{-I})|
\end{eqnarray*}
for any positive integers $i$ and $I$, $i\leq I$. Hence
\eqref{tight} follows if we can check that for any $\varepsilon>0$
and $\gamma>0$ there exist $t_0>0$, $i\in\mn$ and $I\in\mn$,
$i\leq I$ such that
\begin{equation}\label{tight1}
\Prob\bigg\{\sum_{j=i}^I\max_{1\leq k\leq
2^j}|Z_t(k2^{-j})-Z_t((k-1)2^{-j})|>\varepsilon\bigg\}\leq
\gamma,\quad t\geq t_0
\end{equation}
and that
\begin{equation}\label{tight2}
\max_{0\leq k\leq 2^{I}-1}\sup_{0\leq w\leq
2^{-I}}|Z_t(k2^{-I}+w)-Z_t(k2^{-I})|~\tp~ 0,\quad t\to\infty.
\end{equation}

\noindent {\sc Proof of \eqref{tight1}}. By Lemma \ref{imp}, for
any chosen $A>1$ and $\rho\in (0, 1-\beta)$ there exists $t_1>1$
such that
\begin{equation}\label{ineq}
\me |Z_t(k2^{-j})-Z_t((k-1)2^{-j})|^{2l}\leq
c(l)2^{-jl(1-\beta-\rho)}
\end{equation}
whenever $2^{-j}t\geq t_1$. Let $I=I(t)$ denote the integer number
satisfying
$$2^{-I}t\geq t_1>2^{-I-1}t.$$ Then the inequalities \eqref{ineq} and
\begin{eqnarray*}
\me (\max_{1\leq k\leq
2^j}|Z_t(k2^{-j})-Z_t((k-1)2^{-j})|)^{2l}&\leq&
\sum_{k=1}^{2^j}\me |Z_t(k2^{-j})-Z_t((k-1)2^{-j})|^{2l}\\&\leq&
c(l)2^{-j(l(1-\beta-\rho)-1)}
\end{eqnarray*}
hold whenever $j\leq I$. Pick now minimal $l\in\mn$ such that
$l(1-\beta-\rho)>1$. Given positive $\varepsilon$ and $\gamma$
choose minimal $i\in\mn$ satisfying
$$2^{-i(l(1-\beta-\rho)-1)}\leq \varepsilon^{2l}
(1-2^{-(l(1-\beta-\rho)-1)/(2l)})^{2l}\gamma/ c(l).$$ Increase $t$
if needed to ensure that $i\leq I$. Invoking Markov's inequality
and then the triangle inequality for the $L_{2l}$-norm gives
\begin{eqnarray*}
&&\Prob\{\sum_{j=i}^I \max_{1\leq k\leq
2^j}|Z_t(k2^{-j})-Z_t((k-1)2^{-j})|>\varepsilon\}\\&\leq&
\varepsilon^{-2l}\me \bigg(\sum_{j=i}^I \max_{1\leq k\leq
2^j}|Z_t(k2^{-j})-Z_t((k-1)2^{-j})|\bigg)^{2l}\\&\leq&
\varepsilon^{-2l}\bigg(\sum_{j=i}^I (\me (\max_{1\leq k\leq
2^j}|Z_t(k2^{-j})-Z_t((k-1)2^{-j})|)^{2l})^{1/2l}\bigg)^{2l}\\&\leq&
\varepsilon^{-2l} c(l)\bigg(\sum_{j\geq
i}2^{-j(l(1-\beta-\rho)-1)/(2l)}\bigg)^{2l}\\&=&\varepsilon^{-2l}
c(l)\frac{2^{-i(l(1-\beta-\rho)-1)}}{(1-2^{-(l(1-\beta-\rho)-1)/(2l)})^{2l}}
\leq \gamma
\end{eqnarray*}
for all $t$ large enough.

\noindent {\sc Proof of \eqref{tight2}}. We shall use a
decomposition
\begin{eqnarray*}
&&(a(t))^{1/2}(Z_t(k2^{-I}+w)-Z_t(k2^{-I}))\\&=&\sum_{j\geq
0}\big(\1_{\{S_j+\eta_{j+1}\leq
(k2^{-I}+w)t\}}-F((k2^{-I}+w)t-S_j)\big)\1_{\{k2^{-I}t<S_j\leq
(k2^{-I}+w)t\}}\\&+& \sum_{j\geq
0}\big(\1_{\{k2^{-I}t<S_j+\eta_{j+1}\leq
(k2^{-I}+w)t\}}\\&-&(F((k2^{-I}+w)t-S_j)-F(k2^{-I}t-S_j))\big)\1_{\{S_j\leq
k2^{-I}t\}} \\&=:&J_1(t,k,w)+J_2(t,k,w).
\end{eqnarray*}
It suffices to prove that for $i=1,2$
\begin{equation}\label{33}
(a(t))^{-1/2}\max_{0\leq k\leq 2^I-1}\sup_{0\leq w\leq
2^{-I}}|J_i(t,k,w)|~\tp~ 0,\quad t\to\infty.
\end{equation}

\noindent {\sc Proof of \eqref{33} for $i=1$}. Since
$|J_1(t,k,w)|\leq \nu((k2^{-I}+w)t)-\nu(k2^{-I}t)$ and $\nu(t)$ is
a.s.\ nondecreasing we infer $\sup_{0\leq w\leq
2^{-I}}|J_1(t,k,w)|\leq \nu ((k+1)2^{-I}t)-\nu(k2^{-I}t)$. By
Boole's inequality and distributional subadditivity of $\nu(t)$
(see formula (5.7) on p.~58 in \cite{Gut:2009})
\begin{eqnarray*}
&&\Prob\big\{\max_{0\leq k\leq 2^{I}-1}(\nu
((k+1)2^{-I}t)-\nu(k2^{-I}t))>\delta (a(t))^{1/2}\big\}\\&\leq&
\sum_{k=0}^{2^I-1}\Prob\big\{\nu
((k+1)2^{-I}t)-\nu(k2^{-I}t)>\delta (a(t))^{1/2}\big\}\\&\leq& 2^I
\Prob\big\{\nu(2^{-I}t)>\delta (a(t))^{1/2}\big\}\leq 2^I
\Prob\big\{\nu(2t_1)>\delta (a(t))^{1/2}\big\}
\end{eqnarray*}
for any $\delta>0$. The right-hand side converges to zero as
$t\to\infty$ because $\nu(2t_1)$ has finite exponential moments of
all positive orders (see Lemma \ref{fin}).

\noindent {\sc Proof of \eqref{33} for $i=2$}. We have
\begin{eqnarray*}
&&\sup_{0\leq w\leq 2^{-I}}|J_2(t,k,w)|\\&\leq& \sup_{0\leq w\leq
2^{-I}} \bigg(\sum_{j\geq 0}\1_{\{k2^{-I}t<S_j+\eta_{j+1}\leq
(k2^{-I}+w)t\}}\1_{\{S_j\leq k2^{-I}t\}}\\&+&\sum_{j\geq
0}(F((k2^{-I}+w)t-S_j)-F(k2^{-I}t-S_j))\big)\1_{\{S_j\leq
k2^{-I}t\}}\bigg)\\&\leq& \sum_{j\geq
0}\1_{\{k2^{-I}t<S_j+\eta_{j+1}\leq (k+1)2^{-I}t\}}\1_{\{S_j\leq
k2^{-I}t\}}\\&+&\sum_{j\geq
0}(F(((k+1)2^{-I})t-S_j)-F(k2^{-I}t-S_j))\big)\1_{\{S_j\leq
k2^{-I}t\}}\bigg)\\&\leq& \bigg|\sum_{j\geq
0}\big(\1_{\{k2^{-I}t<S_j+\eta_{j+1}\leq
(k+1)2^{-I}t\}}\\&-&(F(((k+1)2^{-I})t-S_j)-F(k2^{-I}t-S_j))\big)\1_{\{S_j\leq
k2^{-I}t\}}\bigg|\\&+& 2\sum_{j\geq
0}(F(((k+1)2^{-I})t-S_j)-F(k2^{-I}t-S_j))\big)\1_{\{S_j\leq
k2^{-I}t\}}\\&=:&J_{21}(t,k)+2J_{22}(t,k).
\end{eqnarray*}

Pick minimal $r\in\mn$ satisfying $r(1-\beta)>1$ so that
$\lim_{t\to\infty} t^{-1}(a(t))^r=\infty$. Using \eqref{11b} with
$u=(k+1)2^{-I}$ and $v=k2^{-I}$ we obtain $$\me
(J_{22}(t,k))^{2r}\leq \me (\nu(1))^{2r} (a(2^{-I}t))^{2r}\leq \me
(\nu(1))^{2r} (a(2t_1))^{2r}$$ which implies
\begin{eqnarray*}
(a(t))^{-r}\me (\max_{0\leq k\leq 2^{I}-1}J_{22}(t,k))^{2r}&\leq&
(a(t))^{-r}2^I \max_{0\leq k\leq 2^I-1}\me
(J_{22}(t,k))^{2r}\\&\leq& (a(t))^{-r}2^I \me (\nu(1))^{2r}
(a(2t_1))^{2r}.
\end{eqnarray*}
The right-hand side converges to zero as $t\to\infty$ by our
choice of $r$. Consequently, $(a(t))^{-1/2} \max_{0\leq k\leq
2^{I}-1}J_{22}(t,k)\tp 0$ as $t\to\infty$ by Markov's inequality.

Using a counterpart of the first inequality in \eqref{mom} for the
martingale $(R^\ast(l,t), \mathcal{F}_l)_{l\in\mn_0}$, where
$R^\ast(0,t):=0$ and
$$R^\ast(l,t):=\sum_{j=0}^{l-1}\big(\1_{\{vt<S_j+\eta_{j+1}\leq
ut\}}-(F(ut-S_j)-F(vt-S_j))\big)\1_{\{S_j\leq vt\}},\quad
l\in\mn$$ for $u=(k+1)2^{-I}t$ and $v=k2^{-I}t$, one can check
that
\begin{eqnarray*}
\me (J_{21}(t,k))^{2r}&\leq& C_r\bigg(\me
\bigg(\int_{[0,\,k2^{-I}t]}(F((k+1)2^{-I}t-y)-F(k2^{-I}t-y)){\rm
d}\nu(y)\bigg)^r \\&+&\me
\int_{[0,\,k2^{-I}t]}(F((k+1)2^{-I}t-y)-F(k2^{-I}t-y)){\rm
d}\nu(y)\bigg).
\end{eqnarray*}
In view of \eqref{11b} the right-hand side does not exceed
$$C_r(\me (\nu(1))^r(a(2^{-I}t))^r+\me \nu(1)a(2^{-I}t))\leq C_r(\me (\nu(1))^r(a(2t_1))^r+\me \nu(1)a(2t_1)).$$ Arguing as above we conclude that
$(a(t))^{-1/2}\max_{0\leq k\leq 2^{I}-1}J_{21}(t,k)\tp 0$ as
$t\to\infty$, and \eqref{33} for $i=2$ follows. The proof of
Theorem \ref{main1} is complete.

\section{Proof of Theorem \ref{main}}\label{pr2}

Set $f(t):=\sqrt{t(1-F(t))}$ for $t>0$. In view of \eqref{regular}
\begin{equation}\label{kar}
\sqrt{\int_0^t(1-F(y)){\rm
d}y}~\sim~(1-\beta)^{-1/2}t^{1/2-\beta/2}(\ell(t))^{1/2}~\sim~(1-\beta)^{-1/2}f(t)
\end{equation}
as $t\to\infty$. Assuming that $\me \xi^r<\infty$ for some
$r>2(1-\beta)^{-1}$ we intend to show that
$$\dfrac{\sup_{0\leq u\leq T}\big|\sum_{k\geq 0}(1-F(ut-S_k))\1_{\{S_k\leq ut\}}-\mu^{-1}\int_0^{ut}(1-F(y)){\rm d}y\big|}
{f(t)} ~\tp~ 0,\quad t\to\infty$$ for any $T>0$. This in
combination with \eqref{kar} and Theorem \ref{main1} is sufficient
for the proof of the $J_1$-convergence.

We proceed by observing that $$\sum_{k\geq
0}(1-F(t-S_k))\1_{\{S_k\leq t\}}-\mu^{-1}\int_0^t(1-F(y)){\rm
d}y=\int_{[0,\,t]}(1-F(t-y)){\rm d}(\nu(y)-\mu^{-1}y).$$
Integration by parts yields
\begin{eqnarray*}
&&\int_{[0,\,t]}(1-F(t-y)){\rm
d}(\nu(y)-\mu^{-1}y)+\Prob\{\xi=t\}\\&=&\nu(t)-\mu^{-1}t-\int_{[0,\,t)}(\nu(t-y)-\mu^{-1}(t-y)){\rm
d}F(y)=\bigg(\nu(t)-\mu^{-1}t-\sigma\mu^{-3/2}W(t)\\&-&\int_{[0,\,t)}(\nu(t-y)-\mu^{-1}(t-y)-\sigma\mu^{-3/2}W(t-y)){\rm
d}F(y)\bigg)\\&+&\sigma\mu^{-3/2}\bigg(W(t)-\int_{[0,\,t)}W(t-y){\rm
d}F(y)\bigg)=: R_1(t)+\sigma\mu^{-3/2}R_2(t),
\end{eqnarray*}
where $\sigma^2={\rm Var}\,\xi<\infty$ and $W$ is a standard
Brownian motion as defined in Lemma \ref{chs}. For any $T>0$
\begin{eqnarray*}
\sup_{0\leq u\leq T}|R_1(ut)|&\leq& \sup_{0\leq u\leq
T}|\nu(ut)-\mu^{-1}ut-\sigma\mu^{-3/2}W(ut)|\\&+&\sup_{0\leq u\leq
T}\int_{[0,\,ut)}|\nu(ut-y)-\mu^{-1}(ut-y)-\sigma\mu^{-3/2}W(ut-y)|{\rm
d}F(y)\\&\leq& \sup_{0\leq u\leq
Tt}|\nu(u)-\mu^{-1}u-\sigma\mu^{-3/2}W(u)|\\&+&\sup_{0\leq u\leq
T}\sup_{0\leq y\leq
ut}|\nu(y)-\mu^{-1}y-\sigma\mu^{-3/2}W(y)|\\&\leq& 2\sup_{0\leq
u\leq Tt}|\nu(u)-\mu^{-1}u-\sigma\mu^{-3/2}W(u)|.
\end{eqnarray*}
By Lemma \ref{chs} the right-hand side is $o(t^{1/r})$ a.s.\ as
$t\to\infty$. Hence, our choice of $r$ in combination with
\eqref{kar} ensure that
$$\lit \frac{\sup_{0\leq u\leq
T}|R_1(ut)|}{f(t)}=0\quad \text{a.s.}$$ Further, we note that
$$R_2(t)=W(t)(1-F(t))+\int_{[0,\,t]}(W(t)-W(t-y)){\rm
d}F(y)=:R_{21}(t)+R_{22}(t).$$ Pick now $\varepsilon\in
(0,(1-\beta)/2)$ if $\beta\in [1/2,1)$ and $\varepsilon\in
(0,1/2-\beta)$ if $\beta\in [0,1/2)$. With this $\varepsilon$, we
have for any $T>0$
\begin{eqnarray*}
\sup_{0\leq u\leq T}\,|R_{22}(ut)|&\leq& \sup_{0\leq u\leq T}\,
\int_{[0,\,ut]}\frac{|W(ut)-W(ut-y)|}{y^{1/2-\varepsilon}}y^{1/2-\varepsilon}{\rm
d}F(y)\\&\leq&\sup_{0\leq u\leq T} \sup_{0\leq x\leq
ut}\,\frac{|W(ut)-W(ut-x)|}{x^{1/2-\varepsilon}}
\int_{[0,\,ut]}y^{1/2-\varepsilon}{\rm d}F(y)\\&\leq& \sup_{0\leq
v<u\leq tT}\,\frac{|W(u)-W(v)|}{(u-v)^{1/2-\varepsilon}}
\int_{[0,\,Tt]}y^{1/2-\varepsilon}{\rm d}F(y)\\&\od& \sup_{0\leq
v<u\leq T}\,\frac{|W(u)-W(v)|}{(u-v)^{1/2-\varepsilon}}
t^\varepsilon\int_{[0,\,Tt]}y^{1/2-\varepsilon}{\rm d}F(y).
\end{eqnarray*}
Here, $$\sup_{0\leq v<u\leq
T}\,\frac{|W(u)-W(v)|}{(u-v)^{1/2-\varepsilon}}<\infty\quad\text{a.s.}$$
because the Brownian motion $W$ is locally H\"{o}lder continuous
with exponent $1/2-\varepsilon$ (for any $\varepsilon\in
(0,1/2)$), and the distributional equality denoted by $\od$ is a
consequence of self-similarity of $W$ with index $1/2$. Now it is
convenient to treat two cases separately.

\noindent {\sc Case $\beta\in [1/2,1)$} in which
$$\dfrac{t^\varepsilon\int_{[0,\,Tt]}y^{1/2-\varepsilon}{\rm d}F(y)}{f(t)}~\sim~\frac{\me\eta^{1/2-\varepsilon}}
{t^{1/2-\beta/2-\varepsilon}(\ell(t))^{1/2}}\to 0,\quad
t\to\infty$$ by \eqref{kar} and our choice of $\varepsilon$. This
proves
\begin{equation}\label{100}
\dfrac{\sup_{0\leq u\leq T}\,|R_{22}(ut)|}{f(t)}~\tp~ 0,\quad
t\to\infty.
\end{equation}

\noindent {\sc Case $\beta\in [0, 1/2)$}. Here, we conclude that
$$\dfrac{t^\varepsilon\int_{[0,\,Tt]}y^{1/2-\varepsilon}{\rm d}F(y)}{f(t)}~\sim~
\dfrac{T^{1/2-\beta-\varepsilon}(\ell(t))^{1/2}}{(1/2-\beta-\varepsilon)t^{\beta/2}}~\to~0,\quad
t\to\infty$$ having utilized \eqref{kar}, Theorem 1.6.4 in
\cite{BGT} which is applicable by our choice of $\varepsilon$ and
the fact that $\lit \ell(t)=0$ when $\beta=0$. Thus, \eqref{100}
holds in this case, too.

It remains to check weak convergence\footnote{Weak convergence on
$D(0,\infty)$ follows immediately from the fact that $\lit
(1-F(ut))/(1-F(t))=u^{-\beta}$ locally uniformly in $u$ on
$(0,\infty)$. A longer proof is needed to treat weak convergence
on $D[0,\infty)$, i.e., with $0$ included.} on $D$ of
$R_{21}(\cdot t)/f(t)$ to the zero function or equivalently
\begin{equation}\label{101}
\dfrac{\sup_{0\leq u\leq T}\,|R_{21}(ut)|}{f(t)}~\tp~0,\quad
t\to\infty
\end{equation}
for each $T>0$. We shall only consider the case where $T>1$, the
case $T\in (0,1]$ being analogous and simpler. By Potter's bound
(Theorem 1.5.6 (iii) in \cite{BGT}), for any chosen $A>1$ and
$\delta>0$ there exists $t_0>0$ such that $1-F(ut)/(1-F(t))\leq
Au^{-\beta-\delta}$ whenever $u\in (0,1]$ and $ut\geq t_0$. With
this $t_0$, write
$$\sup_{0\leq u\leq T}\,|R_{21}(ut)|\leq \sup_{0\leq u\leq
t_0/t}\,|R_{21}(ut)|\vee \sup_{t_0/t\leq u\leq
1}\,|R_{21}(ut)|\vee \sup_{1\leq u\leq T}\,|R_{21}(ut)|.$$

For the first supremum on the right-hand side we have $\sup_{0\leq
u\leq t_0/t}\,|W(ut)|(1-F(ut))\leq \sup_{0\leq u\leq t_0}\,|W(u)|$
which converges to zero a.s.\ when divided by $f(t)$.

For the third supremum,
\begin{eqnarray*}
\sup_{1\leq u\leq T}\,|W(ut)|(1-F(ut))&\leq& (1-F(t))\sup_{0\leq
u\leq T}\,|W(ut)|\\&\od& t^{1/2}(1-F(t))\sup_{0\leq u\leq
T}\,|W(u)|,
\end{eqnarray*}
and the right hand-side divided by $f(t)$ converges to zero a.s.
in view of \eqref{kar}.

Finally,
\begin{equation}\label{102}
\frac{\sup_{t_0/t\leq u\leq 1}\,|W(ut)|(1-F(ut))}{1-F(t)}\leq
A\sup_{t_0/t\leq u\leq 1}\,|W(ut)|u^{-\beta-\delta}.
\end{equation}
As before we distinguish the two cases.

\noindent {\sc Case $\beta\in [1/2,1)$}. Choose $\delta$
satisfying $\delta\in (0,(1-\beta)/2)$. The law of the iterated
logarithm for $|W|$ at large times guarantees that $\lit\,
|W(t)|t^{-\beta-\delta}=0$ a.s.\ and thereupon $\sup_{u\geq
t_0}\,|W(u)|u^{-\beta-\delta}<\infty$ a.s. With this at hand we
continue \eqref{102} as follows:
\begin{eqnarray*}
\frac{\sup_{t_0/t\leq u\leq 1}\,|W(ut)|(1-F(ut))}{f(t)}&\leq&
\dfrac{At^{\beta+\delta}\sqrt{1-F(t)}\sup_{t_0\leq u\leq
t}\,\big(|W(u)|u^{-\beta-\delta}\big)}{t^{1/2}}\\&\sim&
\dfrac{A\sup_{u\geq
t_0}\,\big(|W(u)|u^{-\beta-\delta}\big)(\ell(t))^{1/2}}{t^{1/2-\beta/2-\delta}}\quad\text{a.s.}
\end{eqnarray*}
having utilized \eqref{kar} for the last asymptotic equivalence.
The right-hand side converges to zero a.s.

\noindent {\sc Case $\beta\in [0,1/2)$}. Pick $\delta$ so small
that $\beta+\delta<1/2$. The law of the iterated logarithm for
$|W|$ at small times entails $\lim_{t\to 0+}\,
|W(t)|t^{-\beta-\delta}=0$ a.s.\ whence $\sup_{0\leq u\leq
1}\,|W(u)|u^{-\beta-\delta}<\infty$ a.s. Continuing \eqref{102}
with the help of self-similarity of $W$ we further infer
$$\frac{\sup_{t_0/t\leq u\leq 1}\,|W(ut)|(1-F(ut))}{f(t)}\leq
A\sup_{0\leq u\leq 1}\,|W(u)|u^{-\beta-\delta}\sqrt{1-F(t)}.$$ It
remains to note that the right-hand side trivially converges to
zero a.s.

Combining pieces together we conclude that \eqref{101} holds. The
proof of Theorem \ref{main} is complete.

\section{Integral representation of the limit process
$V_\beta$}\label{proc}

First of all, we note that $V_0$ is a standard Brownian motion.
Therefore, throughout the rest of the section we assume that
$\beta\in (0,1)$.

Denote by $B:=(B(u,v))_{u,v\geq 0}$ a standard Brownian sheet,
i.e., a two-parameter continuous centered Gaussian field with $\me
B(u_1,v_1)B(u_2,v_2)=(u_1\wedge u_2)(v_1\wedge v_2)$. In
particular, $B$ is a Brownian motion in $u$ (in $v$) for each
fixed $v$ ($u$). See Section 3 in \cite{Walsh:1986} for more
properties of $B$. It turns out that the limit process $V_\beta$
can be represented as the integral of a deterministic function
with respect to the Brownian sheet. Such integrals are constructed
in \cite{Ito:1951}. Also, these can be thought of as particular
instances of the integrals of the first kind with respect to the
Brownian sheet, see Section 4 in \cite{Walsh:1986}. Set
\begin{equation}\label{equ}
V^\ast_\beta(u)=\sqrt{1-\beta}\int_{[0,\,u]}\int_{[0,\infty)}\1_{\{x+z^{-1/\beta}>u\}}{\rm
d}B(x,z),\quad u\geq 0.
\end{equation}
Clearly, the process $V_\beta^\ast:=(V_\beta^\ast(u))_{u\geq 0}$
is centered Gaussian. Since
\begin{eqnarray*}
&&\me
V^\ast_\beta(u)V^\ast_\beta(s)\\&=&(1-\beta)\int_{[0,\infty)}\int_{[0,\infty)}\1_{\{x+z^{-1/\beta}>u\}}\1_{[0,\,u]}(x)
\1_{\{x+z^{-1/\beta}>s\}}\1_{[0,\,s]}(x){\rm d}z{\rm
d}x\\&=&(1-\beta)\int_0^s\int_0^\infty
\1_{\{x+z^{-1/\beta}>u\}}{\rm d}z{\rm d}x=(1-\beta)\int_0^s
(u-x)^{-\beta}{\rm d}x\\&=&u^{1-\beta}-(u-s)^{1-\beta}
\end{eqnarray*}
for $0\leq s\leq u$, we conclude that $V_\beta^\ast$ is a version
of $V_\beta$.

The discussion above does not give a clue on where equality
\eqref{equ} comes from. Here is a non-rigorous argument based on
the idea from \cite{Krichagina+Puhalskii:1997} which allows one to
guess \eqref{equ}. We start with an integral representation
\begin{eqnarray}
&&\frac{\sum_{k\geq 0}\big(\1_{\{S_k \leq ut <
S_k+\eta_{k+1}\}}-(1-F(ut-S_k))\1_{\{S_k\leq
ut\}}\big)}{\sqrt{\mu^{-1}\int_0^t (1-F(y)){\rm
d}y}}\notag\\&=&\int_{[0,\,u]}\int_{[0,\infty)}\1_{\{x+z>u\}}{\rm
d}\bigg(\frac{\sum_{k=1}^{\nu(xt)}\1_{\{\eta_k\leq
zt\}}-\nu(xt)F(zt)}{\sqrt{\mu^{-1}\int_0^t (1-F(y)){\rm
d}y}}\bigg)\label{equ1}
\end{eqnarray}
where $\nu(t)=\inf\{k\in\mn: S_k>t\}$ for $t\geq 0$. It is likely
that
$$\frac{\sum_{k=1}^{[xt]}\1_{\{\eta_k\leq zt\}}-[xt]
F(zt)}{\sqrt{t(1-F(t))}}$$ converges weakly as $t\to\infty$ to
$B(x,z^{-\beta})$ on some appropriate space of functions
$g:[0,\infty)\times [0,\infty))\to\R$ equipped with some topology
which is strong enough to ensure continuity of composition. The
latter together with \eqref{regular} and the well-known relation
$t^{-1}\nu(tx)\overset{{\rm J_1}}{\Rightarrow} \mu^{-1}x$ as
$t\to\infty$ should entail that
$$\frac{\sum_{k=1}^{\nu(xt)}\1_{\{\eta_k\leq zt\}}-\nu(xt)
F(zt)}{\sqrt{\mu^{-1}\int_0^t (1-F(y)){\rm d}y}}$$ converges
weakly to $\sqrt{1-\beta} B(x,z^{-\beta})$. One may expect that
the right-hand side of \eqref{equ1} converges weakly to the
right-hand side of \eqref{equ}. On the other hand, the left-hand
side of \eqref{equ1} converges weakly to $V_\beta$ by Theorem
\ref{main1}.

\section{Appendix}

The following result can be found in the proof of Lemma 7.3 in
\cite{Alsmeyer+Iksanov+Marynych:2016}.
\begin{Lemma}\label{impo1}
Let $G:[0,\infty)\to [0,\infty)$ be a locally bounded function.
Then, for any $l\in\N$
\begin{equation}\label{impo1-gen}
\E \bigg(\sum_{k\geq 0}G(t-S_k)\1_{\{S_k\leq t\}}\bigg)^l\leq
\bigg(\sum_{j=0}^{[t]}\sup_{y\in[j,\,j+1)}G(y)\bigg)^l\me
(\nu(1))^l,\quad t\geq 0.
\end{equation}
\end{Lemma}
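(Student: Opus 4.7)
The plan is to discretize $G$ on unit intervals and reduce the $l$-th moment bound to a product of counts, each stochastically dominated by $\nu(1)$. I first partition the range of $t-S_k$: for every $k\in\mn_0$ with $S_k\leq t$ the value $t-S_k$ lies in exactly one of the intervals $[j,j+1)$, $j\in\{0,1,\ldots,[t]\}$, so $G(t-S_k)\leq g_j:=\sup_{y\in[j,j+1)}G(y)$. Introducing the counts
\begin{equation*}
N_j(t):=\#\{k\geq 0:\,t-j-1<S_k\leq t-j\},
\end{equation*}
this yields the pointwise inequality
\begin{equation*}
\sum_{k\geq 0}G(t-S_k)\1_{\{S_k\leq t\}}\;\leq\;\sum_{j=0}^{[t]}g_j\,N_j(t).
\end{equation*}

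The core probabilistic step is the stochastic domination $N_j(t)\leq \nu(1)$ in distribution, which yields $\me\,N_j(t)^l\leq \me\,\nu(1)^l$ for every $j$ and $l$. To establish it, set $\tau:=\inf\{k\in\mn_0:\,S_k>t-j-1\}$. On $\{\tau=\infty\}$ the count $N_j(t)$ vanishes. On $\{\tau<\infty\}$, the strong Markov property ensures that $(\tilde{S}_m)_{m\geq 0}:=(S_{\tau+m}-S_\tau)_{m\geq 0}$ is an independent copy of $(S_m)_{m\geq 0}$, and any $k\geq\tau$ contributing to $N_j(t)$ satisfies $\tilde{S}_{k-\tau}\leq t-j-S_\tau<1$ because $S_\tau>t-j-1$. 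Hence $N_j(t)\leq\#\{m\geq 0:\,\tilde{S}_m\leq 1\}\od \nu(1)$.

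To close the argument I expand the $l$-th power multinomially and apply the generalized H\"older inequality with exponents $p_1=\ldots=p_l=l$:
\begin{equation*}
\me\bigg(\sum_{j=0}^{[t]}g_j\,N_j(t)\bigg)^l=\sum_{j_1,\ldots,j_l}g_{j_1}\cdots g_{j_l}\,\me[N_{j_1}(t)\cdots N_{j_l}(t)]\leq \bigg(\sum_{j=0}^{[t]}g_j\bigg)^l\me\,\nu(1)^l,
\end{equation*}
where the final inequality uses $\me[N_{j_1}(t)\cdots N_{j_l}(t)]\leq\prod_{i=1}^l(\me\,N_{j_i}(t)^l)^{1/l}\leq \me\,\nu(1)^l$. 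Substituting $g_j=\sup_{y\in[j,j+1)}G(y)$ gives the claimed bound.

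The main obstacle is the stochastic domination step: although intuitively any unit interval should contain no more random walk points than $\nu(1)$, making this rigorous requires invoking the strong Markov property at the first passage time $\tau$ and separately handling the degenerate case $\tau=\infty$. Everything else --- the discretization of $G$ and the H\"older bookkeeping --- is essentially routine.
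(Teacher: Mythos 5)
Your proof is correct: the discretization onto unit intervals, the stochastic domination of each interval count by a copy of $\nu(1)$ via the strong Markov property at the first passage time, and the generalized H\"older (equivalently, Minkowski in $L_l$) step are all sound, and they constitute the standard argument behind this bound. The paper itself gives no proof but merely cites Lemma 7.3 of Alsmeyer--Iksanov--Marynych, whose argument is essentially the one you have reproduced, so nothing further is needed.
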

The second auxiliary result is well-known. See, for instance,
Theorem 2.1 (b) in \cite{Iksanov+Meiners:2010}. It is of principal
importance here that $\xi$ is a.s.\ positive rather than
nonnegative.
\begin{Lemma}\label{fin}
For all $a>0$ and all $t>0$ $\me e^{a\nu(t)}<\infty$.
\end{Lemma}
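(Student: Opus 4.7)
The plan is to reduce the problem to a simple geometric argument that exploits \emph{strict} positivity of $\xi$. Since $\xi>0$ a.s., we have $\Prob\{\xi\leq c\}\downarrow 0$ as $c\downarrow 0$. Given $a>0$ and $t>0$, I would therefore fix a threshold $c=c(a)>0$ small enough that
\[
q:=\Prob\{\xi\leq c\}<e^{-a},\qquad p:=1-q=\Prob\{\xi>c\}.
\]
This ability to make $q$ arbitrarily small is the one place where positivity (as opposed to mere nonnegativity) of $\xi$ is used; if $\Prob\{\xi=0\}>0$ one could no longer make $qe^a<1$ for every $a$, and this is exactly the obstacle flagged in the statement.

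Next I would dominate $S_k$ from below by a Bernoulli-sum comparison. Define $\tilde S_k:=c\sum_{j=1}^{k}\1_{\{\xi_j>c\}}$, so that $S_k\geq \tilde S_k$ a.s.\ for every $k$. Consequently
\[
\nu(t)=\inf\{k\in\mn_0:S_k>t\}\;\leq\;\inf\{k\in\mn_0:\tilde S_k>t\}=:\tilde\nu(t).
\]
Setting $m:=\lfloor t/c\rfloor+1$, the event $\{\tilde S_k>t\}$ coincides with having at least $m$ "successes" ($\xi_j>c$) among the first $k$ trials, so $\tilde\nu(t)$ is precisely the waiting time for the $m$th success in an i.i.d.\ Bernoulli($p$) sequence. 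Thus $\tilde\nu(t)$ has the same distribution as a sum $G_1+\cdots+G_m$ of $m$ i.i.d.\ Geometric($p$) random variables supported on $\mn$.

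The last step is a direct computation of the moment generating function: for any $z$ with $qz<1$,
\[
\me z^{G_1}=\frac{pz}{1-qz},\qquad\text{hence}\qquad \me e^{a\tilde\nu(t)}=\Bigl(\frac{pe^a}{1-qe^a}\Bigr)^{\!m}.
\]
By our choice of $c$ we have $qe^a<1$, so the right-hand side is finite, and the monotone coupling $\nu(t)\leq\tilde\nu(t)$ gives $\me e^{a\nu(t)}\leq\me e^{a\tilde\nu(t)}<\infty$. Because $a>0$ and $t>0$ were arbitrary, the lemma follows. The argument is essentially routine; the only conceptual point is recognizing that the strict positivity hypothesis is what lets $q$ be made as small as one wishes, so that $a$ may be taken arbitrarily large.
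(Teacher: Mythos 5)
Your argument is correct. Note that the paper does not actually prove Lemma \ref{fin}; it only cites Theorem 2.1(b) of Iksanov and Meiners (2010) and remarks that strict positivity of $\xi$ is essential. Your proposal supplies a complete, self-contained proof of the standard kind that underlies such citations: truncate at a level $c>0$ chosen so that $q=\Prob\{\xi\leq c\}<e^{-a}$ (possible precisely because $\Prob\{\xi\leq c\}\downarrow\Prob\{\xi=0\}=0$ as $c\downarrow 0$), dominate $\nu(t)$ by the negative binomial waiting time $\tilde\nu(t)$ for $m=\lfloor t/c\rfloor+1$ successes in a Bernoulli$(p)$ sequence, and compute $\me e^{a\tilde\nu(t)}=\bigl(pe^a/(1-qe^a)\bigr)^m<\infty$. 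All steps check out: $S_k\geq c\sum_{j\leq k}\1_{\{\xi_j>c\}}$ holds a.s.\ because $\xi_j>0$, the event identification $\{\tilde S_k>t\}=\{$at least $m$ successes$\}$ is exact, and the geometric generating function converges since $qe^a<1$. You also correctly locate the one point where positivity (rather than mere nonnegativity) enters, which is exactly the caveat the paper attaches to this lemma. The only thing your write-up adds beyond what is needed is the observation itself; as a replacement for the paper's citation it is entirely adequate.
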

Also, we need a classical strong approximation result, see
Corollary 3.1 (ii) in \cite{Csorgo+Horvath+Steinebach:1987}.
\begin{Lemma}\label{chs}
Suppose that $\me \xi^r<\infty$ for some $r>2$. Then there exists
a standard Brownian motion $W$ such that
$$\lit t^{-1/r}\sup_{0\leq s\leq
t}\,\big|\nu(s)-\mu^{-1}s-\sigma\mu^{-3/2}W(s)\big|=0\quad\text{{\rm
a.s.}},$$ where $\mu=\me\xi$ and $\sigma^2={\rm Var}\,\xi$.
\end{Lemma}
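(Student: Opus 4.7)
The plan is to invert the Komlós--Major--Tusnády (KMT) / Major strong approximation for the partial sums $S_n=\xi_1+\cdots+\xi_n$. Under $\me\xi^r<\infty$ with $r>2$, that theorem gives, on a suitable probability space carrying $(\xi_k)_{k\in\mn}$, a standard Brownian motion $W_0$ with
$$ S_n-n\mu \;=\; \sigma W_0(n) + o(n^{1/r}) \qquad \text{a.s.} $$
Since $E\xi^r<\infty$ implies $\max_{k\le n}\xi_k=o(n^{1/r})$ a.s.\ by Borel--Cantelli, the approximation upgrades to continuous time: $\sup_{0\le s\le t}|S_{[s]}-\mu s-\sigma W_0(s)|=o(t^{1/r})$ a.s.

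Next I would invert using the defining relation $S_{\nu(s)-1}\le s<S_{\nu(s)}$ together with the elementary strong law $\nu(s)/s\to\mu^{-1}$ a.s. Substituting $n=\nu(s)$ in the partial-sum approximation and using $S_{\nu(s)}-s\le \xi_{\nu(s)}\le \max_{k\le \nu(t)+1}\xi_k=o(t^{1/r})$ uniformly on $[0,t]$, one obtains
$$ s \;=\; \mu\,\nu(s) + \sigma W_0(\nu(s)) + o(t^{1/r}) \qquad \text{a.s., uniformly in } s\in[0,t], $$
so that
$$ \nu(s)-\mu^{-1}s \;=\; -\mu^{-1}\sigma\, W_0(\nu(s)) + o(t^{1/r}). $$

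The main obstacle is to replace $W_0(\nu(s))$ by a Brownian motion at the deterministic time $\mu^{-1}s$ without inflating the error beyond $o(t^{1/r})$. A priori the gap $|\nu(s)-\mu^{-1}s|$ is only of LIL order $O(\sqrt{s\log\log s})$ uniformly on $[0,t]$, and Lévy's modulus of continuity then bounds the oscillation of $W_0$ over this gap by $O(t^{1/4}(\log t)^{1/2})$. This is $o(t^{1/r})$ only for $r\in(2,4)$. For larger $r$ one bootstraps: the crude estimate just obtained improves the control on $|\nu(s)-\mu^{-1}s|$, which, fed back into the modulus-of-continuity bound, yields a sharper replacement error; iterating finitely many times absorbs the replacement into $o(t^{1/r})$ for any $r>2$. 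This iterative refinement is the technical heart of the argument in \cite{Csorgo+Horvath+Steinebach:1987}, and is where essentially all the work lies.

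Finally, setting $W(s):=\mu^{1/2}W_0(\mu^{-1}s)$, which is again a standard Brownian motion by the self-similarity of $W_0$, recasts the approximation as
$$ \sup_{0\le s\le t}\bigl|\nu(s)-\mu^{-1}s-\sigma\mu^{-3/2}W(s)\bigr| \;=\; o(t^{1/r}) \qquad \text{a.s.,} $$
which is exactly the claimed statement.
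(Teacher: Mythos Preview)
The paper does not supply its own proof of this lemma: it is stated in the Appendix as a direct citation of Corollary~3.1(ii) in \cite{Csorgo+Horvath+Steinebach:1987}, with no argument given. Your sketch is therefore not being compared against an in-paper proof but against the very reference you yourself invoke for the hard step.

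That said, your outline is the standard and correct route to this result: apply the KMT/Major strong approximation to the partial sums, lift it to continuous time via $\max_{k\le n}\xi_k=o(n^{1/r})$, invert through $S_{\nu(s)-1}\le s<S_{\nu(s)}$, and then replace $W_0(\nu(s))$ by $W_0(\mu^{-1}s)$. You correctly identify that this last replacement is the only nontrivial issue and that the naive modulus-of-continuity bound suffices only for $r<4$, with a bootstrap needed in general; this is exactly how \cite{Csorgo+Horvath+Steinebach:1987} proceeds. One cosmetic point: your definition $W(s):=\mu^{1/2}W_0(\mu^{-1}s)$ produces the wrong sign relative to the line $\nu(s)-\mu^{-1}s=-\mu^{-1}\sigma W_0(\mu^{-1}s)+o(t^{1/r})$, but since $-W$ is again a standard Brownian motion this is harmless for an existence statement.
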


%
%
%


\begin{thebibliography}{30}

\bibitem{Alsmeyer+Iksanov+Marynych:2016} {\sc Alsmeyer, G., Iksanov, A. and Marynych, A.} (2017).
Functional limit theorems for the number of occupied boxes in the
Bernoulli sieve. {\em Stoch. Proc. Appl.}, to appear.

\bibitem{Billingsley:1968} {\sc Billingsley, P.} (1968). {\it Convergence of probability
measures}. Wiley: New York.

\bibitem{BGT} {\sc Bingham N.~H., Goldie C.~M., and Teugels, J.~L.} (1989).
{\it Regular variation}. Cambridge University Press: Cambridge.

\bibitem{Chow+Teicher:2003}{\sc Chow, Y.S. and Teicher, H.} (2003). {\it Probability theory: independence, interchangeability, martingales.}
3rd edition, Springer: New York.

\bibitem{Csorgo+Horvath+Steinebach:1987} {\sc Cs\"{o}rg\H{o}, M., Horv\'{a}th, L. and Steinebach, J.} (1987). Invariance
principles for renewal processes. {\em Ann. Probab.} {\bf 15},
1441--1460.

\bibitem{Gut:2009} {\sc Gut, A.} (2009). {\it Stopped random walks. Limit theorems and
applications}, 2nd edition. Springer-Verlag: New York.

\bibitem{Iksanov:2016} {\sc Iksanov, A.} (2017). {\it Renewal theory for perturbed random walks and similar processes}. Birkh\"{a}user, to appear. Preprint version of this
monograph is available at {\tt
http://do.unicyb.kiev.ua/iksan/publications.php}


\bibitem{Iksanov+Marynych+Meiners:2016} {\sc Iksanov, A., Marynych, A. and Meiners, M.} (2017). Asymptotics of random processes with immigration I: Scaling
limits. {\em Bernoulli}, to appear.

\bibitem{Iksanov+Meiners:2010} {\sc Iksanov, A. and Meiners, M.}
(2010). Exponential rate of almost-sure convergence of intrinsic
martingales in supercritical branching random walks. {\em J. Appl.
Probab.} {\bf 47}, 513--525.

\bibitem{Ito:1951} {\sc It\^{o}, K.} (1951). Multiple Wiener integral. {\em J. Math. Soc. Japan}. {\bf 3},
157--169.

\bibitem{Jacod+Shiryaev:2003} {\sc Jacod, J. and Shiryaev, A.~N.} (2003). {\it Limit theorems for
stochastic processes}, 2nd edition. Springer: Berlin.

\bibitem{Kaplan:1975} {\sc Kaplan, N.} (1975). Limit theorems for a $GI/G/\infty$ queue. {\em Ann. Probab.}
{\bf 3}, 780--789.

\bibitem{Konstantopoulos+Lin:1998} {\sc Konstantopoulos, T. and Si-Jian Lin, S.~J.} (1998). Macroscopic models
for long-range dependent network traffic. {\em Queueing Systems}.
{\bf 28}, 215--243.

\bibitem{Krichagina+Puhalskii:1997} {\sc Krichagina, E.~V. and Puhalskii,
A.~A.} (1997). A heavy-traffic analysis of a closed queueing
system with a $GI/\infty$ service center. {\em Queueing Systems}.
{\bf 25}, 235--280.

\bibitem{Lindvall:1973} {\sc Lindvall, T.} (1973). Weak convergence of probability measures
and random functions in the function space $D(0,\infty)$. {\em J.
Appl. Probab.} {\bf 10}, 109--121.

\bibitem{Mikosch+Resnick:2006} {\sc Mikosch, T. and Resnick, S.}
(2006). Activity rates with very heavy tails. {\em Stoch. Proc.
Appl.} {\bf 116}, 131--155.

\bibitem{Resnick+Rootzen:2000} {\sc Resnick, S. and Rootz\'{e}n, H.} (2000). Self-similar communication models and very
heavy tails. {\em Ann. Appl. Probab.} {\bf 10}, 753--778.

\bibitem{Walsh:1986} {\sc Walsh, J.~B.} (1986). Martingales with a multidimensional parameter and stochastic integrals in the
plane. Probability and statistics, Lect. Winter Sch., Santiago de
Chile. {\em Lecture Notes in Mathematics}. {\bf 1215}, 329--491.

\end{thebibliography}
\end{document}